\def\ve{\varepsilon}
\def\mod{\,\text{\rm mod}\;}
\def\beq{\begin{equation}}
\def\eeq{\end{equation}}
\def\cite#1{{\rm [#1]}}
\newtheorem{theorem}{Theorem}
\newtheorem{lemma}{Lemma}
\theoremstyle{definition}
\newtheorem{problem}{Problem}
\newtheorem*{rema}{Remark}
\begin{document}

\numberwithin{equation}{section}
\title{Are there arbitrarily long arithmetic progressions in the sequence of twin primes? II}

\author{J\'anos Pintz\thanks{Supported by OTKA Grants K72731, K67676 and ERC-AdG.228005.}}

\date{}
\maketitle

\section{Introduction}
\label{sec:1}

Until very recently the twin prime conjecture seemed to be completely inaccessible with available methods of number theory.
Four years ago, in a joint work with D. Goldston and C. Y{\i}ld{\i}r{\i}m \cite{GPY} we proved that assuming a very regular distribution of primes in arithmetic progressions we obtain a somewhat weaker result
\beq
\liminf_{n \to \infty} (p_{n + 1} - p_n) \leq 16,
\label{eq:1.1}
\eeq
where $p_n$ denotes the $n$\textsuperscript{th} prime.

The condition was that the level $\vartheta$ of distribution of primes, that is, an exponent, such that for $\ve > 0$, $A > 0$
\beq
\sum_{q \leq N^{\vartheta - \ve}} \max_{\substack{ a\\ (a,q) = 1}} \biggl| \sum_{\substack{p \equiv a \pmod q\\
p \leq N}} \log p - \frac{N}{\varphi(q)} \biggr| \ll_{\ve, A} \frac{N}{(\log N)^A}
\label{eq:1.2}
\eeq
satisfies $\vartheta \geq  0.971$; an assumption, just slightly weaker than the strongest possible hypothesis $\vartheta = 1$, the well-known Elliott--Halberstam \cite{EH} conjecture.
The best known admissible value for~$\vartheta$, the relation $\vartheta = 1/2$, is the celebrated Bombieri--Vinogradov theorem.
In the same work we proved that any $\vartheta > 1/2$ would yield infinitely many bounded gaps between primes, that is
\beq
\liminf_{n \to \infty} (p_{n + 1} - p_n) \leq C(\vartheta).
\label{eq:masodik1.2}
\eeq

Since we can not suppose concerning the level of distribution anything beyond the Elliott--Halberstam conjecture (which would yield also \eqref{eq:1.1}) the question arises, whether we can deduce the twin prime conjecture itself -- or perhaps even a positive answer for the question in the title of our paper -- from a hypothetical very regular behaviour of some related sequences (possibly including the primes themselves) similarly to the Elliott--Halberstam conjecture for primes.

We will give under some plausible hypotheses an affirmative answer for this question, including the existence of arbitrarily long arithmetic progressions in the sequence of twin primes.
Surprisingly the required distribution level is just $\vartheta > \frac34$ (or with more precise arguments even slightly less, $\vartheta \geq 0.7284$), but the sequences for which a suitable analogue of \eqref{eq:masodik1.2} is needed, are not just the primes but all the following ones:
\beq
\log p, \ \lambda(n), \ \lambda(n)\lambda(n + 2),\ \lambda(p + 2) \log p,\ \lambda(p - 2)\log p,
\label{eq:1.3}
\eeq
where $p$ denotes always primes, $\mathcal P$ the set of all primes.

We have to note that while for $\lambda(n)$ we know the analogue of the Bombieri--Vinogradov theorem
\beq
\sum_{q \leq N^{\vartheta - \ve}} \max_a \biggl| \sum_{\substack{n \equiv a\pmod q\\
n \leq N}} \lambda(n) \biggr| \ll_{\ve, A} \frac{N}{\log^A N}
\label{eq:1.4}
\eeq
with $\vartheta = 1/2$ (the proof of Vaughan \cite{Vau}, Theorem 4, with $\mu(n)$ in place of $\lambda(n)$ can be easily modified to yield \eqref{eq:1.4}), our knowledge about the other sequences is much more limited, since the following problems are still open (see \cite{Cho}, \cite{Hil}, \cite{Iwa}).

\begin{problem}
\label{pr:1}
Is $\sum\limits_{n \leq x} \lambda(n) \lambda(n + 2) = o(x)$ (see \cite{Cho, (341), p.~96; the quantity $O(X)$ there is a misprint, it has to be replaced by $o(x)$}), or even whether we have an absolute constant $c$ such that
\beq
\biggl| \sum_{n \leq x} \lambda(n) \lambda(n + 2) \biggr| < (1 - c) x \ \text{ for } \ x > x_0.
\label{eq:1.5}
\eeq
\end{problem}

\begin{problem}
\label{pr:2}
Are there infinitely many primes with $\lambda(p + 2) = -1$ (or $\lambda(p - 2) = -1$)?
\end{problem}

\noindent
{\bf Problem 2'.}
Are there infinitely many primes with $\lambda(p + 2) = 1$ (or $\lambda(p - 2) = 1$)?

\medskip
It may be worth to mention that the author succeeded to show very recently \cite{Pin2} the existence of a positive even $d \leq 18$ such that $\lambda(p + d) = -1$ for infinitely many primes~$p$.

We can more generally work with any fixed positive even integer $h$ in place of~$2$, so the same argument works for the generalized twin prime conjecture too.

\begin{theorem}
\label{th:1}
Suppose that with a $\vartheta = \vartheta_1 > 3/4$, the relations \eqref{eq:1.2}, \eqref{eq:1.4}, further the analogues of \eqref{eq:1.4} with $\lambda(n)$ replaced by $\lambda(n) \lambda(n + h)$, $\lambda (p - h)\log p$ and $\lambda(p + h)\log p$ hold, where $h$ is any positive even integer.
Then $p + h$ is prime for infinitely many primes~$p$.
\end{theorem}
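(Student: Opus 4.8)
The plan is to bypass any GPY-type argument (which cannot reach a $2$-element tuple) and instead to detect the primality of $p+h$ by combining a lower-bound sieve with the Liouville function: once a positive integer $m>1$ is known to have at most two prime factors, $m$ is prime precisely when $\lambda(m)=-1$. Fix a small $\eta>0$ and an $\ve>0$ with $2(\tfrac13+\eta)<\vartheta_1-\ve$ (possible since $\vartheta_1>\tfrac34>\tfrac23$), and put $z:=N^{1/3+\eta}$, $P(z):=\prod_{r\le z}r$ and $D:=N^{\vartheta_1-\ve}$. The elementary observation is that if $N^{1/2}<p\le N$ and $(p+h,P(z))=1$, then $p+h\le N+h<z^{3}$ forces $\Omega(p+h)\le2$, so $\lambda(p+h)=-1$ iff $p+h$ is prime; since moreover a prime $p+h>z$ is automatically coprime to $P(z)$, one obtains the exact identity
\[
\mathbf 1[p+h\in\mathcal P]=\mathbf 1\bigl[(p+h,P(z))=1\bigr]\cdot\tfrac12\bigl(1-\lambda(p+h)\bigr)\qquad\bigl(N^{1/2}<p\le N,\ N\ \text{large}\bigr).
\]

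Next I would insert the classical linear (dimension $1$) lower-bound sieve weights $\lambda_d^-$, supported on squarefree $d\mid P(z)$ with $d\le D$, satisfying $\sum_{d\mid m}\lambda_d^-\le\mathbf 1[(m,P(z))=1]$ and $|\lambda_d^-|\le1$. Since the factor $\tfrac12(1-\lambda(p+h))$ is nonnegative, multiplying the sieve inequality by it and summing over $N^{1/2}<p\le N$ gives
\[
\#\{p\le N:\ p+h\in\mathcal P\}\ \ge\ \tfrac12\sum_{d\le D}\lambda_d^-\,\pi_d(N)\ -\ \tfrac12\sum_{d\le D}\lambda_d^-\!\!\sum_{\substack{N^{1/2}<p\le N\\ p\equiv-h\,(\mathrm{mod}\ d)}}\!\!\lambda(p+h),
\]
where $\pi_d(N)=\#\{N^{1/2}<p\le N:\ p\equiv-h\,(\mathrm{mod}\ d)\}$ (and only $d$ with $(d,h)=1$ contribute, the others giving $\pi_d(N)=0$).

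Now the two terms are estimated from the hypotheses. For the main term I would replace $\pi_d(N)$ by $(\pi(N)-\pi(N^{1/2}))/\varphi(d)$, summing the remainders over $d\le D<N^{\vartheta_1-\ve}$ by \eqref{eq:1.2} and partial summation; what is left is $\tfrac12\pi(N)\sum_{d\mid P(z),\,(d,h)=1}\lambda_d^-/\varphi(d)+o(N/\log^2N)$. Because $\sum_{r\le w}\tfrac{\log r}{r-1}=\log w+O(1)$ the sieve is linear, and its sifting limit $s=\log D/\log z=(\vartheta_1-\ve)/(\tfrac13+\eta)$ satisfies $s>2$, so the Jurkat--Richert bound yields $\sum_d\lambda_d^-/\varphi(d)\ge(f(s)+o(1))\prod_{r\le z,\,r\nmid h}(1-\tfrac1{r-1})$ with $f(s)>0$; the product is $\asymp1/\log N$ by Mertens (the factor $r=2$ is absent because $h$ is even), so the main term is $\gg N/\log^2N$. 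For the error term, $|\lambda_d^-|\le1$ and the assumed analogue of \eqref{eq:1.4} for $\lambda(p+h)\log p$ (with partial summation to strip the $\log p$) bound $\sum_{d\le D}\bigl|\sum_{p\equiv-h\,(d)}\lambda(p+h)\bigr|\ll_A N/\log^AN$, which is negligible. Hence $\#\{p\le N:\ p+h\in\mathcal P\}\gg N/\log^2N$ for all large $N$, far more than the theorem asserts. (For a genuine asymptotic — needed for the longer-arithmetic-progression statement — one would additionally use the hypotheses on $\lambda(n)$, $\lambda(n)\lambda(n+h)$ and $\lambda(p-h)\log p$ to evaluate the contribution of the $q_1q_2$ part; but for Theorem~\ref{th:1} this one-sided bound suffices.)

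I expect the one delicate point to be the non-triviality of the lower-bound sieve: it is essential that the prime level of distribution exceed $2\log z/\log N=\tfrac23+O(\eta)$ \emph{strictly}, so that $f(s)>0$ while at the same time ``$(p+h,P(z))=1\Rightarrow\Omega(p+h)\le2$'' can be arranged with $z$ a fixed power of $N$ lying above $N^{1/3}$ — this is exactly what $\vartheta_1>\tfrac34$ (with room to spare) buys. Given that, the remaining ingredients — the exact Liouville identity, the sign condition $\tfrac12(1-\lambda)\ge0$ that lets the sieve inequality survive multiplication, and the two routine appeals to \eqref{eq:1.2} and to the $\lambda(p+h)\log p$ hypothesis — are straightforward.
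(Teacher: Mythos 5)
Your argument is correct in outline, but it is a genuinely different proof from the one in the paper. The paper keeps the GPY machinery: it takes the two\-/dimensional weight $a_n=\Lambda_R(n;\{0,h\},0)^2$ with $R=N^{(\vartheta-\ve)/2}$, twists it to $b_n=a_n(1-\lambda(n))(1-\lambda(n+h))\ge 0$ as in \eqref{eq:2.2}, and compares $\sum b_n(\theta(n)+\theta(n+h))$ with $\sum b_n\log(3N)$; the factors $(1-\lambda)$ double each prime\-/detecting term (since $\lambda(p)=-1$), turning the unconditional ratio $\frac{2\vartheta}{3}$ into $\frac{4\vartheta}{3}$, whence the threshold $\vartheta>3/4$. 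Killing the cross terms requires \emph{all five} equidistribution hypotheses (Lemma~\ref{lem:3} applied to each function in \eqref{eq:2.7}). You instead run a one\-/dimensional Jurkat--Richert lower\-/bound sieve on the single shifted sequence $\{p+h\}$ with $z=N^{1/3+\eta}$ and level $D=N^{\vartheta_1-\ve}$, and then break parity among the surviving $P_2$'s with the exact identity $\mathbf 1[m\in\mathcal P]=\mathbf 1[(m,P(z))=1]\cdot\frac12(1-\lambda(m))$; the nonnegativity of $\frac12(1-\lambda)$ is exactly what lets the sieve inequality survive. This uses only two of the five hypotheses (\eqref{eq:1.2} and the one for $\lambda(p+h)\log p$), the moduli enter linearly rather than as $[d,e]\le R^2$, and the true threshold of your argument is $\vartheta_1>2/3$ (you only need $s=\log D/\log z>2$), so under the stated hypotheses your conclusion is actually stronger than Theorem~\ref{th:1}: you directly obtain the quantitative bound $\gg \mathfrak S_0(h)N/\log^2N$ of Theorem~\ref{th:2}, with the singular series emerging from $V(z)=\prod_{2<r\le z,\,r\nmid h}\bigl(1-\frac1{r-1}\bigr)$. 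What the paper's symmetric GPY formulation buys in exchange is its compatibility with the $l$\-/parameter optimization of Section~\ref{sec:3} and with the tuple framework needed for Theorem~\ref{th:3}. Two routine points in your write\-/up deserve explicit care: pass between $\log p$\-/weighted and unweighted sums by partial summation (or simply carry the weight $\log p$ through the whole sieve, which matches the hypotheses as stated), and dispose of the moduli $d$ with $(d,2h)>1$, which contribute $O(D)$ in total; neither affects the conclusion.
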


The result can be proved with somewhat more effort under a slightly weaker condition for the distribution of the above sequences $(\vartheta \geq 0.7284)$.
Further we can give a lower estimate for the number of generalized twin primes up to~$N$ which is just a constant factor weaker than the expected number
\beq
\mathfrak S_0(h) \frac{N}{\log^2 N}, \quad \mathfrak S_0(h) := \prod\limits_{p \mid h} \left(1 - \frac1p\right)^{-1} \prod_{p \nmid h} \left( 1 - \frac1{(p - 1)^2} \right).
\label{eq:1.6}
\eeq

\begin{theorem}
\label{th:2}
Suppose that the conditions of Theorem~\ref{th:1} are valid for a $\vartheta \geq 0.7284$.
Then with an absolute constant~$c$ we have for any even $h > 0$
\beq
\#\{p \leq N; \ p, p + h \in \mathcal P\} \geq \frac{c \mathfrak S_0(h)N}{\log^2 N} \ \text{ for } N > N_1.
\label{eq:1.7}
\eeq
\end{theorem}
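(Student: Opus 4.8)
The plan is to break the parity barrier for the twin prime problem by feeding the hypotheses on $\lambda$ into a lower--bound sieve, refining the proof of Theorem~\ref{th:1} so as to retain the count of solutions. Fix an exponent $\eta$ with $1/3<\eta<\vartheta/2$, which is possible because $\vartheta\ge 0.7284>2/3$, and set $z=N^{\eta}$. If $p\le N$ is prime and $P^{-}(p+h)>z$, then $p+h\le N+h<z^{3}$ has at most two prime factors, hence $\mathbf 1_{\mathcal P}(p+h)=\tfrac12\bigl(1-\lambda(p+h)\bigr)$. Consequently
\beq
\#\{p\le N:\ p,\,p+h\in\mathcal P\}\ \ge\ \tfrac12\bigl(S^{+}-S^{\lambda}\bigr),\qquad
S^{+}:=\!\!\!\sum_{\substack{p\le N\\ P^{-}(p+h)>z}}\!\!\!1,\qquad
S^{\lambda}:=\!\!\!\sum_{\substack{p\le N\\ P^{-}(p+h)>z}}\!\!\!\lambda(p+h).
\label{eq:propred}
\eeq
So it is enough to prove $S^{+}\gg\mathfrak S_0(h)\,N/\log^{2}N$ and $|S^{\lambda}|\le(1-c)S^{+}$ for some absolute $c>0$. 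Since \eqref{eq:1.7} is only claimed up to a constant factor, we may use a lossy lower--bound sieve and may throw away the $\Omega(p+h)=2$ contribution to $S^{+}$ altogether.

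For $S^{+}$ I would apply the linear Jurkat--Richert lower--bound sieve to $\mathcal A=\{p+h:\ p\le N\}$, whose local densities are $1/(q-1)$ for $q\nmid h$ and $0$ for $q\mid h$ (so the prime $q=2$, otherwise fatal, is harmless because $h$ is even), so that the sieve density is $\asymp\mathfrak S_0(h)/\log N$; the required level of distribution up to $N^{\vartheta-\ve}$ is precisely \eqref{eq:1.2} applied to the progressions $p\equiv-h\pmod q$; and since $\eta<\vartheta/2$ the sifting ratio exceeds $2$, so the lower--bound sieve function is strictly positive and $S^{+}\gg\mathfrak S_0(h)\,N/\log^{2}N$. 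To attain the precise threshold $\vartheta\ge 0.7284$ (and the cleaner $\vartheta>3/4$ of Theorem~\ref{th:1}) one should instead run the more efficient GPY--type Selberg weight attached to an admissible tuple through $0$ and $h$; the optimisation of that weight against the bound for $S^{\lambda}$ below is what fixes the constant.

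The heart of the matter is the bound $|S^{\lambda}|\le(1-c)S^{+}$, and here the parity obstruction is overcome by the $\lambda$--hypotheses. One expands $\mathbf 1_{P^{-}(m)>z}$ by Buchstab's identity --- pushed far enough that no residual prime--detecting sum survives, or else replaced by a Vaughan-- or Heath--Brown--type combinatorial identity to keep the error terms admissible --- multiplies through by $\lambda(p+h)$, and uses complete multiplicativity $\lambda(p+h)=\lambda(d)\lambda((p+h)/d)$ to reorganise $S^{\lambda}$ into three kinds of pieces: (i) a diagonal piece of size $\asymp\bigl|\sum_{p\le N}\lambda(p+h)\bigr|$, which is $o(N/\log N)$ by the hypothesised analogue of \eqref{eq:1.4} for $\lambda(p+h)\log p$ (the companion hypothesis for $\lambda(p-h)\log p$ entering symmetrically, after the change of variable $n=p+h$); (ii) ``type~I'' pieces in which a divisor $d$ ranges freely while the other variable is tied to the primality of $dm-h$, which after collapsing $\lambda(d)^{2}=1$ become sums of $\lambda(p+h)$ over residue classes and are controlled, on average over moduli up to $N^{\vartheta-\ve}$, by the same hypothesis (their duals, with the prime variable smoothed, being controlled by the hypothesised analogue of \eqref{eq:1.4} for $\lambda(n)$); and (iii) genuinely bilinear ``type~II'' pieces with both factors in medium ranges, which a Cauchy--Schwarz step converts into additive correlations of $\lambda$ against a shift of $\lambda$, exactly the quantity bounded by the hypothesis on $\lambda(n)\lambda(n+h)$. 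Summing these estimates gives $|S^{\lambda}|=o(S^{+})$, and \eqref{eq:propred} then yields \eqref{eq:1.7}.

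I expect the main obstacle to be precisely this last step, together with the attendant bookkeeping: one must arrange the combinatorial identity and the cut--offs so that every piece of $S^{\lambda}$ falls into one of the three admissible types, with no residual piece left uncontrolled by the parity barrier and with all moduli and all bilinear ranges fitting inside the available level $N^{\vartheta-\ve}$ --- the $\lambda$--twisted analogue of the well--known ``ranges'' difficulty in Chen's theorem. It is exactly the demand that these ranges be mutually compatible --- simultaneously with the constraint $\eta>1/3$ forced by the reduction \eqref{eq:propred} and with the positivity requirement $\eta<\vartheta/2$ of the lower--bound sieve --- that pushes the admissible distribution level above $3/4$ in the clean treatment of Theorem~\ref{th:1}, and a careful optimisation (the GPY--type weight for $S^{+}$, balanced against the type~II estimate for $S^{\lambda}$) brings the threshold down to $\vartheta\ge 0.7284$.
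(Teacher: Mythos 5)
Your route is entirely different from the paper's, and unfortunately it has a genuine gap at exactly the point you identify as ``the heart of the matter.'' The reduction to $\tfrac12(S^{+}-S^{\lambda})$ and the linear-sieve lower bound for $S^{+}$ are fine, but the treatment of $S^{\lambda}$ does not go through with the hypotheses actually assumed in Theorem~\ref{th:1}. Those hypotheses are equidistribution in arithmetic progressions of five \emph{specific} functions; in particular the only correlation of $\lambda$ with itself that is assumed is the single fixed-shift correlation $\lambda(n)\lambda(n+h)$. Your type~II step, after Cauchy--Schwarz on a bilinear form supported on $p+h=dm$, produces shifted convolutions of the shape $\sum_{n}\lambda(d_1n+h_1)\lambda(d_2n+h_2)$ with \emph{varying} coefficients $d_1,d_2$ (or, dually, bilinear forms in $\lambda$ over progressions), and none of these is covered by the assumed statements. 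Moreover your final claim $|S^{\lambda}|=o(S^{+})$ is not merely unproved but heuristically false: on the set $P^{-}(p+h)>N^{1/3}$ one has $S^{\lambda}=\#\{p+h=q_1q_2\}-\#\{p+h\in\mathcal P\}$, and both counts are genuinely of order $\mathfrak S_0(h)N/\log^2N$ (the $P_2$ count contributes roughly $\log 2$ times the singular-series main term), so $S^{\lambda}$ is of the same order as $S^{+}$. One therefore needs the quantitative bound $|S^{\lambda}|\le(1-c)S^{+}$ with matched constants, which forces asymptotics (not just admissible upper bounds) for every piece of the decomposition --- a Chen-type ranges problem that your sketch does not resolve. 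Your explanation of the thresholds $3/4$ and $0.7284$ as arising from compatibility of combinatorial ranges is also not where they come from.

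The paper avoids all of this by never decomposing a sifted $\lambda$-sum. It takes the GPY weight for $\mathcal H=\{0,h\}$, $k=2$ --- in Section~3 the optimized combination $a'_n=\bigl(\Lambda_R(n;\mathcal H,0)+\frac{3u}{\log R}\Lambda_R(n;\mathcal H,1)\bigr)^2$ --- and multiplies it by the pointwise nonnegative factor $(1-\lambda(n))(1-\lambda(n+h))$ to form $b'_n$. Expanding this factor, every term involving a $\lambda$ is negligible by Lemma~\ref{lem:3} applied to the five hypothesized functions, while at a prime $n$ the factor $1-\lambda(n)$ equals exactly $2$; this doubles each $\theta$-weighted main term relative to plain GPY and turns the unattainable requirement $\vartheta>3/2$ for $k=2$ into $g_u(\vartheta)=\vartheta\bigl(\frac43+3u+\frac{9u^2}{5}\bigr)-\bigl(1+2u+\frac{3u^2}{2}\bigr)>0$, satisfied for $u=(\sqrt{34}-2)/9$ and $\vartheta$ in the stated range. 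The count in \eqref{eq:1.7} then falls out because the positive excess $P'-B'\log(3N)\gg\mathfrak S_0(h)N\log^3R$ is supported on $n$ with $n,n+h$ both prime, where $b'_n(\theta(n)+\theta(n+h))\ll\log^4R\log N$; dividing gives $\gg\mathfrak S_0(h)N/\log^2N$ pairs. If you want to salvage your approach you would have to add to the hypothesis list uniform control of $\lambda$-shifted convolutions with varying moduli, which is a strictly stronger assumption than the theorem makes.
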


The estimate \eqref{eq:1.7} implies that as shown in \cite{Zho}, or in greater generality in \cite{Pin1}, the method of proof of Green--Tao \cite{GT} can be adapted to this situation, yielding

\begin{theorem}
\label{th:3}
Suppose the conditions of Theorem~\ref{th:1} for a $\vartheta_1 \geq 0.7231$, that is, that all $5$ functions in \eqref{eq:1.3} have distribution level~$\vartheta_1$, with $2$ replaced by $h$.
Then for any even $h > 0$ there are arbitrarily long arithmetic progressions such that $p + h$ is also prime for all elements of the progression.
\end{theorem}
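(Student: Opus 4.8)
The plan is to reduce Theorem~\ref{th:3} to Theorem~\ref{th:2} via the transference machinery of Green--Tao, following the adaptation carried out in \cite{Zho} and \cite{Pin1}. The starting point is the lower bound \eqref{eq:1.7}, which guarantees that the set $A_h := \{p \in \mathcal P : p + h \in \mathcal P\}$ has positive relative density (of the expected order $1/\log N$) inside the primes, hence inside an appropriately majorized ``almost-prime'' model set. First I would set up the weighted counting function: instead of working with the characteristic function of $A_h$ directly, one passes to a truncated (sieve-theoretic, Selberg-type) enveloping sieve $\nu$ supported on a residue class structure compatible with the admissible tuple $\{0,h\}$, exactly as the $W$-trick is used in \cite{GT}. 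One checks that $\nu$ is a pseudorandom measure: the linear forms condition and the correlation condition. These are standard for the von Mangoldt-type majorant and do not require any new input beyond what \cite{GT} already supplies, since the majorant is the same one used for primes; the distributional hypotheses of Theorem~\ref{th:1} are \emph{not} needed here — they were already fully consumed in the proof of Theorem~\ref{th:2}.

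Next I would invoke the relative Szemer\'edi theorem (the transference principle): if $f$ is a nonnegative function bounded above by a pseudorandom measure $\nu$ and $f$ has positive mean, then $f$ contains the expected number of $k$-term arithmetic progressions, for every $k$. Here I take $f$ to be (a normalized multiple of) $\mathbf 1_{A_h}(n)\log n$ restricted to a suitable progression $n \equiv b \pmod{W}$ chosen so that the local obstructions vanish; Theorem~\ref{th:2} gives precisely the lower bound on the mean of $f$ needed to apply the transference principle. The conclusion is that there are $\gg_k$ many $k$-term progressions $n, n+d, \dots, n+(k-1)d$ all lying in $A_h$, i.e.\ with $n + jd$ and $n + jd + h$ both prime for $0 \le j \le k-1$. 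Letting $k \to \infty$ gives arbitrarily long progressions, and this holds for every even $h > 0$ since $\{0,h\}$ is admissible.

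The one genuinely delicate point — and the reason the hypothesis in Theorem~\ref{th:3} is stated with the slightly larger exponent $\vartheta_1 \ge 0.7231$ rather than $0.7284$ — is that the transference argument does not merely need the \emph{existence} of the positive-density lower bound, but a lower bound that survives the $W$-trick uniformly as $W \to \infty$ along primorials, together with the pseudorandomness of the majorant on the same scale. One must therefore re-run the proof of Theorem~\ref{th:2} with the progression $n \equiv b \pmod W$ in place of the integers, tracking that the implied constant in \eqref{eq:1.7} can be taken uniform in $W$ (after dividing out the natural density factor $W/\varphi(W)$-type quantities); this is exactly the content of \cite{Zho} and \cite{Pin1}, and it is where the small numerical slack between $0.7284$ and $0.7231$ is spent. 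The main obstacle, then, is not conceptual but bookkeeping: verifying that the sieve weights and the distributional estimates of Theorem~\ref{th:1} remain effective after the $W$-trick, so that the hypotheses of the relative Szemer\'edi theorem are met with a quantitatively positive mean. Once that is in place, the conclusion follows formally, and I would simply cite \cite{GT}, \cite{Zho}, \cite{Pin1} for the transference step rather than reproving it.
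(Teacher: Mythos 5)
Your reduction of Theorem~\ref{th:3} to the lower bound \eqref{eq:1.7} of Theorem~\ref{th:2} via the Green--Tao transference machinery (pseudorandom majorant, $W$-trick, relative Szemer\'edi theorem), with the adaptation delegated to \cite{Zho} and \cite{Pin1}, is exactly the route the paper takes --- indeed the paper offers nothing beyond that citation, so your proposal is if anything more detailed. One correction to your ``delicate point'': $0.7231 < 0.7284$, so the hypothesis of Theorem~\ref{th:3} is the \emph{weaker} of the two, not a ``slightly larger exponent''; the value $0.7231$ is simply what the optimization over $u$ in Section~\ref{sec:3} yields (see \eqref{eq:3.6}), so no numerical slack is being spent on uniformity in $W$.
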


\section{Proof of Theorem~\ref{th:1}}
\label{sec:2}

In the work \cite{GPY} we introduced for $k$-element sets $\mathcal H = \{h_i\}^k_{i = 1}$ the function
\beq
\Lambda_R(n; \mathcal H, l) := \frac1{(k + l)!} \sum_{\substack{d \leq R\\
d \mid P_{\mathcal H}(n)}} \mu(d) \left(\log \frac{R}{d}\right)^{k + l}, \quad P_{\mathcal H}(n) = \prod^k_{i = 1} (n + h_i),
\label{eq:2.1}
\eeq
which we will use now in the special case $k = 2$, $\mathcal H = \{0, h\}$.
However, instead of $a_n = \Lambda^2_R(n)$ as in \cite{GPY} we will weight now the integers with
\beq
b_n = a_n(1 - \lambda(n)) (1 - \lambda(n + h)) \geq 0, \quad a_n = \Lambda_R(n; \mathcal H, l)^2.
\label{eq:2.2}
\eeq
The singular series
\beq
\mathfrak S(\mathcal H) = \prod_p \left( 1 - \frac{\nu_p(\mathcal H)}{p} \right) \left(1 - \frac1p\right)^{-k},
\label{eq:2.3}
\eeq
where $\nu_p(\mathcal H)$ denotes the number of residue classes occupied by $\mathcal H \mod p$, reduces now to $\mathfrak S_0(h)$, given in \eqref{eq:1.6}.
The $k$-tuple $\mathcal H$ is called in general admissible if $\nu_p = \nu_p(\mathcal H) < p$ for all primes $p$, equivalently, if $\mathfrak S(\mathcal H) \neq 0$.
We remark that for any admissible $k$-tuple $\mathcal H = \mathcal H_k$, hence also in our case $\mathcal H = \{0, h\}$ we have
\beq
\mathfrak S(\mathcal H_k) \geq \prod_{p \leq 2k} \frac1p \prod_{p > k} \left(1 - \frac{k}{p}\right) \left(1 - \frac1{p}\right)^{-k} > c_0(k).
\label{eq:2.4}
\eeq
We quote from \cite{GPY} as our first two lemmas Propositions~1 and 2 (see \eqref{eq:2.14}--\eqref{eq:2.15}), which will form the base of our argument.
We will restrict ourselves for the case $\mathcal H = \mathcal H_1 = \mathcal H_2$, but keep the parameter~$l$, which will be used in Section~\ref{sec:3} to show the stronger Theorem~\ref{th:2}.
We will use the notation $\theta(n) = \log p$ if $n = p \in \mathcal P$, $\theta(n) = 0$ otherwise, $n \sim N$ for $n \in [N, 2N)$, $C$ an absolute constant whose value may be different at different occurrences.

\begin{lemma}
\label{lem:1}
If $R \ll N^{1/2}(\log N)^{-C}$ then
\beq
\frac1N \sum_{n \sim N} \Lambda_R(n; \mathcal H, l_1) \Lambda_R(n; \mathcal H, l_2) = (\mathfrak S(\mathcal H) + o(1)) {l_1 + l_2\choose l_1} \frac{(\log R)^{k + l_1 + l_2}}{(k + l_1 + l_2)!}.
\label{eq:2.5}
\eeq
\end{lemma}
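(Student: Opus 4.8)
The plan is to evaluate the sum $\sum_{n\sim N}\Lambda_R(n;\mathcal H,l_1)\Lambda_R(n;\mathcal H,l_2)$ by expanding the two copies of the divisor sum from \eqref{eq:2.1} and interchanging summations. After the expansion one is left with $\frac{1}{(k+l_1)!(k+l_2)!}\sum_{d_1,d_2\le R}\mu(d_1)\mu(d_2)\left(\log\frac{R}{d_1}\right)^{k+l_1}\left(\log\frac{R}{d_2}\right)^{k+l_2}$ times the count of $n\sim N$ with $[d_1,d_2]\mid P_{\mathcal H}(n)$. By the Chinese Remainder Theorem this inner count is $\frac{N}{[d_1,d_2]}\prod_{p\mid[d_1,d_2]}\nu_p(\mathcal H)+O\bigl(\prod_{p\mid[d_1,d_2]}\nu_p(\mathcal H)\bigr)$, the error coming from the fractional part. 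First I would dispose of the error term: summing $\prod_{p\mid[d_1,d_2]}\nu_p$ over $d_1,d_2\le R$ with the $\log$ weights contributes $O(R^2(\log R)^{O(1)})$, which under the hypothesis $R\ll N^{1/2}(\log N)^{-C}$ is $o(N)$ provided $C$ is chosen large enough relative to $k,l_1,l_2$; this is exactly why the normalization $R\ll N^{1/2}(\log N)^{-C}$ appears in the statement.

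The main term then becomes, after dividing by $N$, a purely arithmetic sum
$$
\frac{1}{(k+l_1)!(k+l_2)!}\sum_{d_1,d_2\le R}\frac{\mu(d_1)\mu(d_2)}{[d_1,d_2]}\Bigl(\prod_{p\mid[d_1,d_2]}\nu_p(\mathcal H)\Bigr)\Bigl(\log\tfrac{R}{d_1}\Bigr)^{k+l_1}\Bigl(\log\tfrac{R}{d_2}\Bigr)^{k+l_2},
$$
which is a classical Selberg-type sum. I would evaluate it by a contour-integral representation: write each $\bigl(\log\frac{R}{d_i}\bigr)^{k+l_i}$ as $\frac{(k+l_i)!}{2\pi i}\int (R/d_i)^{s_i}\frac{ds_i}{s_i^{k+l_i+1}}$, interchange, and recognize the resulting double Dirichlet series $F(s_1,s_2)=\sum_{d_1,d_2}\frac{\mu(d_1)\mu(d_2)}{[d_1,d_2]d_1^{s_1}d_2^{s_2}}\prod_{p\mid[d_1,d_2]}\nu_p(\mathcal H)$ as an Euler product. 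Factoring out the dominant singular part $\zeta(1+s_1+s_2)^{-k}$ (using $\nu_p=k$ for all but finitely many $p$, and $\nu_p\le p-1$ always), one sees $F(s_1,s_2)=G(s_1,s_2)\zeta(1+s_1+s_2)^{-k}$ with $G$ holomorphic and nonzero in a neighbourhood of the origin, and $G(0,0)=\mathfrak S(\mathcal H)$. Shifting the contours and picking up the residue at $s_1+s_2=0$, where the $k$-fold zero of $1/\zeta$ meets the poles of order $k+l_1+1$ and $k+l_2+1$ of $s_1^{-(k+l_1+1)}s_2^{-(k+l_2+1)}$, produces the main term; a residue computation (equivalently a beta-integral evaluation) yields precisely the factor $\binom{l_1+l_2}{l_1}\frac{(\log R)^{k+l_1+l_2}}{(k+l_1+l_2)!}$, with all lower-order terms absorbed into the $o(1)$.

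The genuinely delicate step is the contour analysis: one must justify shifting the $s_1,s_2$ contours past the singularity of $\zeta(1+s_1+s_2)^{-k}$ and control the resulting integrals using the classical zero-free region for $\zeta$, so that the tail contributions are $o\bigl((\log R)^{k+l_1+l_2}\bigr)$ — this is where the error term in \eqref{eq:2.5} is really $O\bigl((\log R)^{k+l_1+l_2}/\log\log R\bigr)$ or a small power saving, and it is the part that requires care rather than bookkeeping. Everything else — the CRT count, the Euler-product factorization, the identification $G(0,0)=\mathfrak S(\mathcal H)$, and the final residue evaluation — is standard and can be quoted essentially verbatim from \cite{GPY}, since the weight $\Lambda_R(n;\mathcal H,l)$ and the admissible tuple $\mathcal H$ are exactly as there; only the specialization to $k=2$, $\mathcal H=\{0,h\}$ and the retention of the two parameters $l_1,l_2$ need to be tracked.
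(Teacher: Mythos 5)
The paper itself gives no proof of this lemma; it is quoted directly from \cite{GPY1} (Proposition 1 there), so the benchmark is the Goldston--Pintz--Y{\i}ld{\i}r{\i}m argument, and your outline does follow its structure: expand both divisor sums, count the $n\sim N$ with $[d_1,d_2]\mid P_{\mathcal H}(n)$ by the Chinese Remainder Theorem, dispose of the error $O(R^2(\log R)^{O(1)})=o(N)$ using $R\ll N^{1/2}(\log N)^{-C}$, and evaluate the remaining arithmetic sum by a double Mellin/contour integral with the zero-free region controlling the shifted contours. That is the right plan.

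There is, however, a genuine error in the step that actually produces the constant. The local Euler factor of $F(s_1,s_2)$ is $1-\nu_p\bigl(p^{-1-s_1}+p^{-1-s_2}-p^{-1-s_1-s_2}\bigr)$, so the correct factorization is
\beq
F(s_1,s_2)=G(s_1,s_2)\,\frac{\zeta(1+s_1+s_2)^{k}}{\zeta(1+s_1)^{k}\,\zeta(1+s_2)^{k}},
\notag
\eeq
with $G$ holomorphic near the origin and $G(0,0)=\mathfrak S(\mathcal H)$. With your claimed $F=G\,\zeta(1+s_1+s_2)^{-k}$ the function $G$ would carry a pole of order $2k$ along $s_1+s_2=0$ and could not be holomorphic at the origin. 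The singularity structure you describe is therefore backwards: $s_1+s_2=0$ carries a $k$-th order \emph{pole} (from $\zeta(1+s_1+s_2)^{k}$), not a $k$-fold zero, while the factors $\zeta(1+s_i)^{-k}$ cancel $k$ of the $k+l_i+1$ poles of $s_i^{-(k+l_i+1)}$, leaving poles of order only $l_i+1$ at $s_i=0$. This is not cosmetic: the double residue of $R^{s_1+s_2}(s_1+s_2)^{-k}s_1^{-(l_1+1)}s_2^{-(l_2+1)}$ yields $\binom{l_1+l_2}{l_1}\frac{(\log R)^{k+l_1+l_2}}{(k+l_1+l_2)!}$ as required, whereas the residue computation dictated by your factorization (a $k$-fold zero at $s_1+s_2=0$ against poles of orders $k+l_1+1$ and $k+l_2+1$) gives, after a Vandermonde convolution, the different constant $\binom{2k+l_1+l_2}{k+l_2}\frac{(\log R)^{k+l_1+l_2}}{(k+l_1+l_2)!}$. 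You quote the correct final answer, but the argument as written would not produce it; the factorization needs to be corrected before the residue step.
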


\begin{lemma}
\label{lem:2}
If
 $R \ll N^{(\vartheta - \ve)/2}$ then for any $h \in \mathcal H$ we have
\begin{align}
&\frac1N \sum_{n \sim N} \Lambda_R(n; \mathcal H, l_1) \Lambda_R(n; \mathcal H, l_2)\theta(n + h) =\\
 & =(\mathfrak S(\mathcal H) + o(1)) {l_1 + l_2 + 2\choose l_1 + 1} \frac{(\log R)^{k + l_1 + l_2 + 1}}{(k + l_1 + l_2 + 1)!}.\notag
\label{eq:2.6}
\end{align}
\end{lemma}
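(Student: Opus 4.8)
The plan is to prove Lemma~\ref{lem:2} by the standard Goldston--Pintz--Y{\i}ld{\i}r{\i}m argument (the statement is in fact Proposition~2 of \cite{GPY}, so one may simply cite it): expand the two copies of $\Lambda_R$ into sums over the divisor variables $d_1,d_2$ from their defining formula \eqref{eq:2.1}, interchange the order of summation so that the innermost sum becomes a sum of $\theta$ over a single arithmetic progression, invoke the distribution hypothesis \eqref{eq:1.2} to replace that sum by its expected main term $N/\varphi(\cdot)$, and then evaluate the remaining arithmetic sum over $d_1,d_2$ by a two-dimensional contour integral. First, writing $d=[d_1,d_2]$,
\[
\Lambda_R(n;\mathcal H,l_1)\Lambda_R(n;\mathcal H,l_2)=\frac{1}{(k+l_1)!(k+l_2)!}\sum_{\substack{d_1,d_2\le R\\ d\mid P_{\mathcal H}(n)}}\mu(d_1)\mu(d_2)\Bigl(\log\tfrac{R}{d_1}\Bigr)^{k+l_1}\Bigl(\log\tfrac{R}{d_2}\Bigr)^{k+l_2};
\]
multiplying by $\theta(n+h)$ and summing over $n\sim N$, note that for a contributing term $n+h=p$ is a prime $\asymp N$, while every prime $q\mid d$ divides $P_{\mathcal H}(n)$ and has $q\le R^2=o(N)$, too small to equal $p$; hence $(d,n+h)=1$ and the condition $d\mid P_{\mathcal H}(n)$ is equivalent to $d\mid\prod_{h_i\ne h}(n+h_i)$. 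Thus the admissible residue classes $a\bmod d$ form a set of size $\nu_h(d)$, where $\nu_h$ is multiplicative with $\nu_h(p)=\nu_p(\mathcal H\setminus\{h\})$ (equal to $\nu_p(\mathcal H)-1$ for all but finitely many $p$, as the class $n\equiv -h$ is dropped --- one fewer class than in Lemma~\ref{lem:1}), and on each such class the inner sum is $\sum_{p\sim N,\ p\equiv a+h\,(d)}\log p$.

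The substantive step --- and the only place the hypothesis is used --- is to replace each inner progression sum by $N/\varphi(d)$. The accumulated error is
\[
\ll\sum_{d\le R^2}\Bigl(\sum_{[d_1,d_2]=d}\bigl|\log\tfrac{R}{d_1}\bigr|^{k+l_1}\bigl|\log\tfrac{R}{d_2}\bigr|^{k+l_2}\Bigr)\nu_h(d)\max_{(a,d)=1}\Bigl|\sum_{\substack{p\sim N\\ p\equiv a\,(d)}}\log p-\frac{N}{\varphi(d)}\Bigr|.
\]
Since the number of pairs $(d_1,d_2)$ with $[d_1,d_2]=d$ is $3^{\omega(d)}=d^{o(1)}$, $\nu_h(d)\le d^{o(1)}$, and the logarithmic weights are $\ll(\log N)^{O(1)}$, this is $\ll(\log N)^{O(1)}\sum_{d\le R^2}d^{o(1)}\max_{(a,d)=1}|\cdots|$, which, combining \eqref{eq:1.2} with the trivial bound $\ll N/\varphi(d)$ (by Cauchy--Schwarz, to absorb the factor $d^{o(1)}$), is $\ll_A N(\log N)^{-A}$ precisely when $R^2\le N^{\vartheta-\ve}$, i.e. $R\ll N^{(\vartheta-\ve)/2}$ --- exactly the stated range. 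I expect this error estimate to be the crux; everything else is unconditional and elementary.

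It then remains to evaluate the main term
\[
\frac{N}{(k+l_1)!(k+l_2)!}\sum_{d_1,d_2\le R}\frac{\mu(d_1)\mu(d_2)\,\nu_h([d_1,d_2])}{\varphi([d_1,d_2])}\Bigl(\log\tfrac{R}{d_1}\Bigr)^{k+l_1}\Bigl(\log\tfrac{R}{d_2}\Bigr)^{k+l_2}.
\]
Representing $\bigl(\log\tfrac{R}{d_i}\bigr)^{k+l_i}=\tfrac{(k+l_i)!}{2\pi i}\int (R/d_i)^{s_i}\,s_i^{-(k+l_i+1)}\,ds_i$ over small circles about the origin (which cancels the factorials out front), the $d_1,d_2$-sum factors into an Euler product with $p$-th factor $1-\tfrac{\nu_h(p)}{p-1}\bigl(p^{-s_1}+p^{-s_2}-p^{-s_1-s_2}\bigr)$. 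Since $\nu_h(p)=k-1$ for all but finitely many $p$, this product equals $\zeta(1+s_1+s_2)^{k-1}\zeta(1+s_1)^{-(k-1)}\zeta(1+s_2)^{-(k-1)}$ times a function holomorphic and non-vanishing near $s_1=s_2=0$ whose value there is $\mathfrak S(\mathcal H)$ of \eqref{eq:2.3} --- here the identity $1/(p-1)=(1/p)(1-1/p)^{-1}$ is exactly what turns the $(k-1)$-tuple density into the full singular series. Shifting the contours past $0$ and extracting the residue (the governing singularity is $s_1^{-(l_1+2)}s_2^{-(l_2+2)}(s_1+s_2)^{-(k-1)}e^{(s_1+s_2)\log R}$, of total order $k+l_1+l_2+3$, producing a power $(\log R)^{k+l_1+l_2+1}$), a routine beta-function computation yields the combinatorial constant $\binom{l_1+l_2+2}{l_1+1}/(k+l_1+l_2+1)!$ and hence the asymptotic asserted in Lemma~\ref{lem:2}.
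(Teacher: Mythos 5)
The paper offers no proof of this lemma at all---it is quoted verbatim as Proposition~2 of \cite{GPY}---and your sketch (including the observation that one may simply cite that result) is a faithful reconstruction of the standard Goldston--Pintz--Y{\i}ld{\i}r{\i}m argument, with the correct reduction to $\nu_p(\mathcal H)-1$ residue classes, the correct Euler-factor and singularity structure $s_1^{-(l_1+2)}s_2^{-(l_2+2)}(s_1+s_2)^{-(k-1)}$, and the right combinatorial constant. I see no gaps.
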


We will need an analogous lemma for the sequences
\beq
f(n) = \lambda(n), \ \ \lambda(n) \lambda(n + h), \ \ \theta(n)\lambda(n + h), \ \ \lambda(n) \theta(n + h),
\label{eq:2.7}
\eeq
where we use the hypothesis that $f(n)$ satisfies the analogue of \eqref{eq:1.4}, that is,
\beq
\sum_{q \leq N^{\vartheta - \ve}} \max_a \biggl| \sum_{\substack{n \equiv a \pmod q\\
n \leq N}} f(n) \biggr| \ll_{\ve, A} \frac{N}{\log^A N}.
\label{eq:2.8}
\eeq

\begin{lemma}
\label{lem:3}
Suppose \eqref{eq:2.8} and $f(n) \ll (\log N)^C$.
If $A > 0$ arbitrary, $R \ll N^{(\vartheta - \ve)/2}$, then we have for any $\mathcal H = \{h_i\}^k_{i = 1}$
\beq
S_f(N) = \frac1N \sum_{n \sim N} \Lambda_R(n; \mathcal H, l_1) \Lambda_R(n; \mathcal H, l_2) f(n) \ll \frac{N}{\log^A N},
\label{eq:2.9}
\eeq
where the constant implied by the $\ll$ symbol depends on $k, l_i, C, A, \ve$.
\end{lemma}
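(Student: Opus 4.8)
The plan is to expand the two factors $\Lambda_R$ by their definition \eqref{eq:2.1}, interchange the order of summation, and reduce the whole estimate to the hypothesis \eqref{eq:2.8}. Opening the divisor sums gives, up to the constant factor $\bigl((k+l_1)!\,(k+l_2)!\bigr)^{-1}$, the expression
\[
\sum_{d_1,d_2\le R}\mu(d_1)\mu(d_2)\Bigl(\log\tfrac{R}{d_1}\Bigr)^{k+l_1}\Bigl(\log\tfrac{R}{d_2}\Bigr)^{k+l_2}\sum_{\substack{n\sim N\\ d_1\mid P_{\mathcal H}(n),\ d_2\mid P_{\mathcal H}(n)}}f(n).
\]
Only squarefree $d_1,d_2$ contribute; the inner condition is $d\mid P_{\mathcal H}(n)$ with $d:=[d_1,d_2]$ squarefree and $d\le R^2\ll N^{\vartheta-\ve}$; the congruence $d\mid P_{\mathcal H}(n)$ confines $n$ to $\nu(d):=\prod_{p\mid d}\nu_p(\mathcal H)\le k^{\omega(d)}$ residue classes modulo $d$ (with $\omega(d)$ the number of prime divisors of $d$); and for each such $d$ there are at most $3^{\omega(d)}$ pairs $(d_1,d_2)$ with $[d_1,d_2]=d$. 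Estimating the logarithmic weights trivially by $(\log R)^{k+l_i}\ll(\log N)^{k+l_i}$, splitting $\sum_{n\sim N}=\sum_{n\le 2N}-\sum_{n\le N}$, and observing that the relevant residue classes need not be prime to $d$ — so that it is exactly the form \eqref{eq:2.8} of Bombieri--Vinogradov, controlling \emph{all} residue classes and valid \emph{a fortiori} with $2N$ in place of $N$, that is needed — one is led to
\[
\Bigl|\sum_{n\sim N}\Lambda_R(n;\mathcal H,l_1)\Lambda_R(n;\mathcal H,l_2)f(n)\Bigr|\ \ll\ (\log N)^{2k+l_1+l_2}\sum_{d\le R^2}(3k)^{\omega(d)}E(d),
\]
where $E(d):=\max_{a}\bigl|\sum_{n\le 2N,\;n\equiv a\,(\mathrm{mod}\,d)}f(n)\bigr|$.

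It remains to bound $\Sigma:=\sum_{d\le R^2}(3k)^{\omega(d)}E(d)$. The weight $(3k)^{\omega(d)}$ is unbounded — it is only $\ll_\delta d^\delta$ — so it cannot be fed directly into \eqref{eq:2.8}; the right move is to decouple it by Cauchy--Schwarz,
\[
\Sigma\ \le\ \Bigl(\sum_{d\le R^2}(3k)^{2\omega(d)}E(d)\Bigr)^{1/2}\Bigl(\sum_{d\le R^2}E(d)\Bigr)^{1/2}.
\]
In the first factor one uses the crude bound $E(d)\le(\log N)^C\bigl(2N/d+1\bigr)$ (from $f(n)\ll(\log N)^C$) together with the elementary estimates $\sum_{d\le x}z^{\omega(d)}/d\ll_z(\log x)^z$ and $\sum_{d\le x}z^{\omega(d)}\ll_z x(\log x)^{z-1}$, which give $\sum_{d\le R^2}(3k)^{2\omega(d)}E(d)\ll N(\log N)^{C+9k^2}$. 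In the second factor \eqref{eq:2.8} applies with an exponent $A_1$ at our disposal — legitimately, since $d$ ranges only over $d\le R^2\ll N^{\vartheta-\ve}$ — giving $\sum_{d\le R^2}E(d)\ll_{A_1}N(\log N)^{-A_1}$. Multiplying and restoring the factor $(\log N)^{2k+l_1+l_2}$ yields $\ll N(\log N)^{\,2k+l_1+l_2+(C+9k^2-A_1)/2}$, and choosing $A_1=A_1(A,C,k,l_1,l_2)$ large enough makes this $\ll_A N(\log N)^{-A}$ for every $A$; this is \eqref{eq:2.9}.

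The only genuinely delicate point is this final step. A naive appeal to \eqref{eq:2.8} is impossible, because the combinatorics of the double divisor sum forces the divisor-bounded weight $(3k)^{\omega(d)}$ onto the individual error terms; one is therefore obliged to surrender part of the Bombieri--Vinogradov saving to the trivial bound via Cauchy--Schwarz. This device is routine in this circle of ideas (it already underlies the error analysis in \cite{GPY}), but it is where essentially all the work lies. Everything preceding it — the expansion of $\Lambda_R\Lambda_R$, the counting of the $\nu(d)$ residue classes and of the $\le 3^{\omega(d)}$ pairs with a given lcm, and the passage from $n\sim N$ to $n\le N$ — is purely mechanical, and the hypothesis $R\ll N^{(\vartheta-\ve)/2}$ is precisely what keeps every modulus $[d_1,d_2]\le R^2$ that occurs inside the admissible range $q\le N^{\vartheta-\ve}$ of \eqref{eq:2.8}.
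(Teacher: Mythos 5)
Your proposal is correct and follows essentially the same route as the paper: expand the two divisor sums, bound the combinatorial weight by $(3k)^{\omega(q)}=d_{3k}(q)$, and use Cauchy--Schwarz to split the sum $\sum_q d_{3k}(q)E(q)$ into a trivially bounded factor and one controlled by the hypothesis \eqref{eq:2.8}. The only (cosmetic) difference is the arrangement of Cauchy--Schwarz --- you pair $(3k)^{2\omega}E$ against $E$, while the paper pairs $d_{3k}^2/q$ against $qE^2$ and then bounds $qE^2\le\max(qE)\cdot\sum E$ --- and both yield the same bound $N(\log N)^{(C+9k^2-A_1)/2}$ up to the logarithmic prefactor.
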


\begin{proof}
For any squarefree $m$ and $\mathcal H = \{h_i\}^k_{i = 1}$ the number $\nu_m = \nu_m(\mathcal H)$ of the solution of the congruence
\beq
\prod^k_{i = 1} (n + h_i) \equiv 0 \pmod m
\label{eq:2.10}
\eeq
satisfies by the Chinese remainder theorem
\beq
\nu_m = \prod_{p\mid m} \nu_p \leq k^{\omega(m)} = d_k(m),
\label{eq:2.11}
\eeq
where $\omega(m)$ denotes the number of prime factors of~$m$, $d_k(m)$ the number of ways to write $m$ as a product of~$k$ integers.
Interchanging the order of summation we can write $S_f(N)$ with the notation $K = 2k + l_1 + l_2$ as
\begin{align}
&\frac1N \sum_{d \leq R} \sum_{e \leq R}
\frac{\mu(d) \mu(e) \left(\log \frac{R}{d}\right)^{k + l_1} \left(\log \frac{R}{d} \right)^{k + l_2}}{(k + l_1)! (k + l_2)!}
\sum_{\substack{n \sim N\\
[d, e] \mid P_{\mathcal H}(n)}} f(n) \\
&\ll \frac{\log^K R}{N} \sum_{q \leq R^2} \biggl(\sum_{q = [d, e]} 1\biggr) \nu_q E_N(q), \notag
\label{eq:2.12}
\end{align}
where (for $q \leq N$)
\beq
E_N(q) := \max_a \biggl| \sum_{\substack{n \sim N\\
n \equiv a\pmod q}} f(n) \biggr| \ll \frac{N(\log N)^C}{q}.
\label{eq:2.13}
\eeq

Using our hypotheses we obtain as in (9.13) of \cite{GPY}
\begin{align}
\label{eq:2.14}
&S_f(N) \ll \frac{\log^K R}{N} \biggl( \sum_{q \leq R^2} \frac{d_{3k}(q)^2}{q} \sum_{q \leq R^2} q E^2_N(q) \biggr)^{1/2} \ll \\
&\ll \frac{\log^K R}{N} \left((\log N)^{9k^2} N(\log N)^{C } \frac{N}{\log^A N}\right)^{1/2} \ll (\log N)^{K + (9k^2 + C - A)/2}.
\notag
\end{align}
\end{proof}

Using the notation
\beq
B_0 := B_0(R, \mathcal H, k, l) = {2l \choose l}  \frac{(\log R)^{k + 2l}}{(k + 2l)!} \mathfrak S(\mathcal H),
\label{eq:2.15}
\eeq
we have by Lemmas~\ref{lem:1} and \ref{lem:3} in the special case $\mathcal H_1 = \mathcal H_2 = \{0, h\}$, $k = 2$, $l_1 = l_2 = l = 0$
\beq
B := \sum_{n \sim N} b_n \sim \sum_{n \sim N} a_n \sim B_0 := \frac{\mathfrak S_0(h) N \log^2 R}{2} .
\label{eq:2.16}
\eeq

On the other hand we obtain from Lemmas \ref{lem:2} and \ref{lem:3} with the same choice $\mathcal H = \{0, h\}$, $l_1 = l_2 = 0$
\begin{align}
\label{eq:2.17}
P^* :&= \sum_{n \sim N} b_n (\theta(n) + \theta(n + h)) =\\
&= 2 \sum_{n \sim N} a_n \bigl\{ (1 - \lambda(n + h)) \theta(n) + (1 - \lambda(n)) \theta(n + h) \bigr\} \sim 4 \cdot 2 B_0 \cdot \frac{\log R}{3}. \notag
\end{align}

In order to have at least one prime pair $p, p + h$ with $p \in [N, 2N)$ we need to show with $R = N^{(\vartheta - \ve)/2}$
\beq
P^* - B \log(3N) > 0,
\label{eq:2.18}
\eeq
which is really true if
\beq
\frac83 \frac{\vartheta - \ve}{2} > 1 + \ve .
\label{eq:2.19}
\eeq
This is trivially true for any fixed $\vartheta > 3/4$ if $\ve$ is sufficiently small and $N$ sufficiently large.
This proves Theorem~\ref{th:1}.

\section{Proof of Theorem~\ref{th:2}}
\label{sec:3}

The proof of Theorem~\ref{th:2} needs a relatively simple modification, which allows to weaken slightly the constraint $\vartheta > 3/4$.
This can be achieved -- similarly to Section~3 of \cite{GPY} -- by applying a linear combination of the weights $\Lambda_R(n; \mathcal H, l)$ with $l = 0$ and $l = 1$.
More precisely we define
\beq
a'_n := a'_n(\mathcal H; u) = \left(\Lambda_R(n; \mathcal H, 0) + \frac{u(k + 1)}{\log R} \Lambda_R(n; \mathcal H, 1) \right)^2,
\label{eq:3.1}
\eeq
where $u$ is a real parameter to be chosen optimally later.
In our case $k = 2$ we obtain with the notation $B_0$ in \eqref{eq:2.16} for $\mathcal H = \{0, h\}$ from Lemmas~\ref{lem:1} and \ref{lem:2} in this case with the analogue $b'_n = a'_n(1 - \lambda(n))(1 - \lambda(n + h))$:
\beq
B'(N, \mathcal H, u) := \sum_{n \sim N} b'_n \sim \sum_{n \sim N} a'_n \sim B_0 \left(1 + 2u + 2u^2 \cdot \frac34\right).
\label{eq:3.2}
\eeq

The analogue of the evaluation of \eqref{eq:2.17} is now
\begin{align}
\label{eq:3.3}
P' :&= \sum_{n \sim N} b'_n(\theta(n) + \theta(n + h)) \sim \\
&\sim 2 \sum_{n \sim N} a'_n(\theta(n) + \theta(n + h))\sim \notag\\
&\sim 4 B_0 \log R \left( \frac23 + \frac{6u}{4} + \frac{18u^2}{20} \right). \notag
\end{align}
This means that we have to assure
\beq
P' - B' \log(3N) > 0
\label{eq:3.4}
\eeq
which will hold if we can find a $u$ with
\beq
g_u(\vartheta) = \vartheta\left(\frac43 + 3u + \frac{9 u^2}{5}\right) - \biggl( 1 + 2u + \frac{3u^2}{2}\biggr) > 0
\label{eq:3.5}
\eeq
if we choose $\ve$ sufficiently small.
The optimal choice for $u$ is $u = u_0 = \bigl(\sqrt{34} - 2\bigr)/9$, which yields a fixed positive lower bound $c_0$ for $g(\vartheta) = g_{u_0}(\vartheta)$ if
\beq
\vartheta \geq \vartheta_1 = 0.7231.
\label{eq:3.6}
\eeq
This is enough to obtain a weighted estimate for the number of generalized twin primes in $[N, 2N)$
\beq
\frac1N \sum_{\substack{n \sim N\\
n, n + h \in \mathcal P}} a'_n \log (3N) \geq c_1 \mathfrak S_0(h) \log^3 R.
\label{eq:3.7}
\eeq
However, if $n$ and $n + h$ are both primes then for $\mathcal H = \{0, h\}$ clearly
\beq
\Lambda_R(n; \mathcal H, l) = \frac1{(2 + l)!} (\log R)^{k + l} = \frac1{(2 + l)!}(\log R)^{2 + l},
\label{eq:3.8}
\eeq
consequently
\beq
a_n(\mathcal H, u_0) = \left(\frac{1 + u_0}{2}\right)^2 \log^4 R,
\label{eq:3.9}
\eeq
which by \eqref{eq:3.6} and \eqref{eq:3.7} leads to the estimate
\beq
\#\bigl\{p \in [N, 2N), \ p, p + h \in \mathcal P\bigr\} \geq \frac{c_2 \mathfrak S_0(h)N}{\log R \log N} \geq \frac{c_3 \mathfrak S_0(h) N}{\log^2 N} .
\label{eq:3.10}
\eeq

\begin{rema}
If we are allowed to choose a bigger $\vartheta$, then the lower estimate \eqref{eq:3.10} will improve but we do not reach the expected number corresponding to $c_3 = 1$ even supposing $\vartheta = 1$, the Elliott--Halberstam conjecture.
\end{rema}

\noindent
{\small J\'anos {\sc Pintz}\\
R\'enyi Mathematical Institute of the Hungarian Academy
of Sciences\\
Budapest\\
Re\'altanoda u. 13--15\\
H-1053 Hungary\\
E-mail: pintz@renyi.hu}

\end{document}